\def\urls#1{{\small\url{#1}}}
\def\bfmath#1{{\mathchoice{\mbox{\boldmath$#1$}}%
{\mbox{\boldmath$#1$}}%
{\mbox{\boldmath$\scriptstyle#1$}}%
{\mbox{\boldmath$\scriptscriptstyle#1$}}}}
\def\rd#1{{\color{red}#1}}
\newcolumntype{P}[1]{>{\centering\arraybackslash}p{#1}}
\newcommand{\utwi}[1]{\mbox{\boldmath $#1$}}
\renewcommand{\hat}{\widehat}
\renewcommand{\tilde}{\widetilde}
\newcommand{\epsy}{\epsilon}
\newcommand{\diag}{\mathsf{diag}}
\newcommand{\cD}{{\cal D}}
\newcommand{\cL}{{\cal{L}}}
\newcommand{\cX}{{\cal X}}
\newcommand{\bb}{{\bfmath b}}
\newcommand{\bd}{{\bfmath d}}
\newcommand{\bg}{{\bfmath{g}}}
\newcommand{\bh}{{\bfmath{h}}}
\newcommand{\bx}{{\bfmath{x}}}
\newcommand{\bw}{{\bfmath{w}}}
\newcommand{\bz}{{\bfmath{z}}}
\newcommand{\by}{{\bfmath{y}}}
\newcommand{\bA}{{\bfmath A}}
\newcommand{\bC}{{\bfmath{C}}}
\newcommand{\bD}{{\bfmath{D}}}
\newcommand{\bJ}{{\bfmath{J}}}
\newcommand{\bM}{{\bfmath M}}
\newcommand{\bI}{{\bfmath{I}}}
\newcommand{\bW}{{\bfmath W}}
\newcommand{\bU}{{\bfmath U}}
\newcommand{\bV}{{\bfmath V}}
\newcommand{\bgamma}{{\utwi{\gamma}}}
\newcommand{\blambda}{{\utwi{\lambda}}}
\newcommand{\bxi}{{\utwi{\xi}}}
\newcommand{\bGamma}{{\utwi{\Gamma}}}
\newcommand{\reals}{\mathbb{R}}
\newcommand{\sfT}{\textsf{T}}
\newcommand{\proj}{\mathsf{Proj}}
\newtheorem{proposition}{Proposition}
\newtheorem{lemma}{Lemma}
\newtheorem{theorem}{Theorem}
\theoremstyle{definition}
\newtheorem{example}{Example}
\newtheorem{assumption}{Assumption}
\theoremstyle{definition}
\newcommand{\andrey}[1]{\textcolor{blue}{(Andrey says:  #1)}}
\newcommand{\yue}[1]{\textcolor{violet}{(Yue says:  #1)}}
\def\sfT{{\hbox{\tiny \textsf{T}}}}
\def\eqdef{\mathbin{:=}}
\newcommand{\bfprobe}{\bxi}
\newlength{\noteWidth}
\long\def\notes#1{\ifinner
             {\tiny #1}
             \else
              \marginpar{\parbox[t]{\noteWidth}{\raggedright\tiny #1}}
               \fi}
\begin{document}

\title{\LARGE \bf 
Time-Varying Feedback Optimization for Quadratic Programs with Heterogeneous Gradient Step Sizes}

\author{Andrey Bernstein$^{*}$, Joshua Comden$^{*}$, Yue Chen$^{*}$, Jing Wang$^{*}$%
\thanks{This work was authored by the National Renewable
    Energy Laboratory, managed and operated by Alliance for Sustainable
    Energy, LLC, for the U.S. Department of Energy (DOE) under Contract No.
    DE-AC36-08GO28308. The views
    expressed in the article do not necessarily represent the views of the DOE
    or the U.S. Government. The U.S. Government retains and the publisher, by
    accepting the article for publication, acknowledges that the U.S. Government
    retains a nonexclusive, paid-up, irrevocable, worldwide license to publish
    or reproduce the published form of this work, or allow others to do so, for
    U.S. Government purposes.}
	\thanks{$^{*}$A.~Bernstein, J.~Comden, Y.~Chen, and J.~Wang are with the Power Systems Engineering Center, National Renewable Energy Laboratory, Golden, CO 80401, USA.
		{\tt\small andrey.bernstein@nrel.gov, joshua.comden@nrel.gov,
        yue.chen@nrel.gov,
        jing.wang@nrel.gov}}
}

\maketitle

\begin{abstract}
Online feedback-based optimization has become a promising framework for real-time optimization and control of complex engineering systems. This tutorial paper surveys the recent advances in the field as well as provides novel convergence results for primal-dual online algorithms with heterogeneous step sizes for different elements of the gradient. The analysis is performed for quadratic programs and the approach is illustrated on applications for adaptive step-size and model-free online algorithms, in the context of optimal control of modern power systems.
\end{abstract}


\IEEEpeerreviewmaketitle

\section{Introduction}

This tutorial paper is set in the context of designing real-time optimization and control approaches for efficient operation of complex engineering systems, such as power networks, transportation systems, and communication networks. To illustrate the main concepts, consider an \emph{algebraic} model describing the input-output relation of the system at time $t$:
\begin{align}
\label{eq:model}
\by(t) = \bh_t(\bx(t))
\end{align}
where $\bx(t) \in \mathbb{R}^n$ is a vector of system inputs; $\by(t) \in \mathbb{R}^m$ is the vector of system outputs; and $\bh_t(\cdot): \mathbb{R}^n \rightarrow \mathbb{R}^m$ is the system model\footnote{For example, $\bh_t(\bx(t)) = \bh(\bx(t), \bd(t))$ where $\bd(t)$ is a disturbance process.} map. The efficient operation of the system is then formalized via a time-varying optimization problem\footnote{For simplicity, we assume a stylized optimization where the  performance is measured only with respect to the output $\by$; a general constrained optimization problem is considered in Section \ref{sec:Network}.} 
\begin{align} 
\label{eq:model_opt}
\min_{\bx \in \cX(t), \by = \bh_t(\bx) } f_t(\by)
\end{align}
where $\cX(t)$ is a set representing input constraints (e.g., physical or engineering); and $f_t:  \mathbb{R}^{m}  \rightarrow \mathbb{R}$ is a function quantifying the system performance.

Despite the success of modern optimization theory, directly applying optimization methods to solve \eqref{eq:model_opt} in real time might be impractical for real engineering systems. The major challenges are imprecise knowledge of the system model (including external disturbances) as well as the computational complexity of solving \eqref{eq:model_opt} in real time. 

Online feedback optimization (OFO) \cite{opfPursuit,Hauswirth2016,Hauswirth2018,bernstein2019,colombino2019,chen2020,haberle2020,he2022modelfree} is an emerging framework that aims at tackling the above challenges. The essential idea of OFO boils down to implementing the optimization algorithms in a feedback loop with the actual system. To illustrate the idea, consider first a standard (feedforward) optimization approach to seek solutions (or stationary points) of  \eqref{eq:model_opt} via a projected-gradient method that at each iteration $k = 0, 1, ..$ performs
\begin{align}
\bx^{(k+1)} = & \proj_{\cX(t)}\Big\{ \bx^{(k)} - \alpha (\bJ_t^{(k)})^\sfT \nabla_{\by} f_t (h_t(\bx^{(k)}))  \Big\}, \label{eq:offline}
\end{align}
where $\bJ_t^{(k)} := \bJ_{\bh_t} (\bx^{(k)})$ is the Jacobian matrix of  $\bh_t$; $\proj_{\cX}(\bz) := \arg \min_{\bx \in \cX} \|\bz - \bx\|_2$ is the projection operator; and $\alpha > 0$ is the step size. In the OFO framework, \eqref{eq:offline} is replaced with an \emph{online} update rule
\begin{align}
\bx^{(k+1)} = & \proj_{\cX^{(k)}}\Big\{ \bx^{(k)} - \alpha (\bJ^{(k)})^\sfT \nabla_{\by} f^{(k)}(\hat{\by}^{(k)})  \Big\}, \label{eq:online}
\end{align}
performed at each discrete time instance $t_k$ (the iteration index  is now identified with a time step). In \eqref{eq:online},  $\cX^{(k)} := \cX(t_k)$, $\bJ^{(k)} := \bJ_{\bh_{t_k}} (\bx^{(k)}) $, $f^{(k)} := f_{t_k}$; and $\hat{\by}^{(k)}$ is the measurement of the system output $\bh_{t_k}(\bx^{(k)})$ representing the feedback. Note that \eqref{eq:online} eliminates the explicit dependence on the system model by the virtue of feedback but still requires the real-time knowledge of the system Jacobian  $\bJ^{(k)}$. 

The existing OFO methods can be classified according to their application scenarios and implementation details; see, e.g., \cite{he2022modelfree,zhan2023} for a survey. In particular, the following aspects can be used to classify the OFO methods:
\begin{enumerate}
    \item \textbf{Model type: algebraic or dynamic.}  This paper focuses on the algebraic model \eqref{eq:model}. For OFO methods with dynamic system models, e.g., see \cite{he2022modelfree,colombino2019}.
    \item \textbf{Model information.} To implement \eqref{eq:online}, the gradient of $F^{(k)} (\bx) := f^{(k)} (\bh_{t_k} (\bx))$, or approximation thereof, needs to be computed. There are two main approaches in the literature:
    \begin{enumerate}
        \item \textbf{Jacobian estimation.} The Jacobian matrix $\bJ^{(k)}$ is estimated either based on a partial/approximate system model (e.g., linearization of \eqref{eq:model} around a given operating point) \cite{bernstein2019} or leveraging data-driven/machine-learning-based approaches \cite{nonhoff2021}. Denoting by $\hat{\bJ}$ the approximate Jacobian, the following proxy is then used in \eqref{eq:online} for the gradient of the objective function:
        \begin{equation}
            \hat{\nabla} F^{(k)} :=\hat{\bJ}^\sfT \nabla_{\by} f^{(k)}(\hat{\by}^{(k)}) 
            \label{eq:grad_proxy1}
        \end{equation}
        
        \item \textbf{Zero-order approximation.} The gradient of $F^{(k)}$ is directly approximated using one of the many zero-order approximation approaches. For example, in \cite{chen2020}, a two-point  approximation was used:
\begin{equation} \hspace{-1.5cm}
    \hat{\nabla} F^{(k)} \eqdef   \frac{1}{2\epsy} \bxi \left[ F^{(k)} (\bx^{(k)} + \epsy \bxi) - F^{(k)}(\bx^{(k)} - \epsy \bxi) \right]
    \label{eq:grad_proxy2}
\end{equation}
where $\bfprobe \in \reals^n$ is a perturbation (or exploration) vector,  and $\epsy > 0$ is a (small) scalar. Other approaches include the one-point  estimation \cite{bernstein2019a} and the residual one-point estimation \cite{he2022modelfree}. In any case, the obtained proxy for the gradient only requires measurements of the system output feedback. 
    \end{enumerate}

    \item \textbf{Algorithm/problem type.} Here the distinction is usually between primal and primal-dual (or dual) methods. Primal methods such as \eqref{eq:online} are effective in solving optimization problems where no output constraints are present or when the projection on the constraints can be computed efficiently and centrally \cite{he2022modelfree}. Primal-dual methods can deal efficiently with complicated  output constraints with the caveat of only asymptotic constraint satisfaction. Also, primal-dual methods better lend themselves to distributed implementation \cite{bernstein2019}. 
    \item \textbf{Problem time variability.} In static settings, where the objective and constraints in \eqref{eq:model_opt} do not vary with time, the proposed algorithms typically are shown to converge to the optimal solution/stationary point, or neighborhood thereof. In dynamic settings, the algorithms are typically designed to track the time-varying optimal solution/stationary point of \eqref{eq:model_opt} up to a well-defined error.
    \item \textbf{Algorithm step size.} Most of the literature, with the exception of \cite{comden2023adaptive,haberle2020}, focuses on algorithms with a uniform (single) step size parameter; cf. $\alpha$ in \eqref{eq:online}. This assumption  simplifies significantly the convergence analysis, especially in the constrained time-varying case. In practice, however, it is beneficial to have different step sizes for different elements of $\hat{\nabla} F^{(k)}$. Moreover, adaptive and heterogeneous step sizes have been shown to improve numerical performance of the OFO algorithms on specific test cases \cite{comden2023adaptive}.
\end{enumerate}

\emph{Contributions.} In addition to surveying the recent advances of OFO methods, this paper aims at addressing the last challenge above, namely providing convergence guarantees for time-varying OFO methods with heterogeneous step sizes. Specifically, we consider the online OFO algorithm of the form
\begin{align}
\bx^{(k+1)} = & \proj_{\cX^{(k)}}\Big\{ \bx^{(k)} - \alpha \bGamma^{(k)} \hat{\nabla} F^{(k)}  \Big\}, \label{eq:online_hetero}
\end{align}
where $\bGamma^{(k)} = \diag (\bgamma^{(k)})$ is a diagonal positive definite matrix. Most of our analysis focuses on quadratic programs (QPs) and linear algebraic system models due to reasons that will become apparent next. 

While the prototypical problem \eqref{eq:model_opt} and algorithm \eqref{eq:online_hetero} are used here to present the main concepts, the paper considers constrained problems and  online primal-dual solution methods. Finally, to illustrate the performance of the developed methods, we apply them to the
problem of optimal operation of power distribution network with distributed energy resources.

\paragraph*{Organization}
The rest of the paper is organized as follows. In Section \ref{sec:Network}, we describe the essentials of the OFO framework. In Section \ref{sec:hetero}, we introduce the algorithms with heterogeneous step sizes and provide conditions for convergence. In Section \ref{sec:power_systems}, we showcase two applications of heterogeneous step-size OFO in the context of power systems optimization. Finally, we conclude in Section \ref{sec:conclusion}.

\section{Time-Varying OFO Framework}
\label{sec:Network}

\subsection{Target Optimization Problem}
Consider a network of $N$ systems, whose  objectives are defined via the following optimization problem:
\begin{subequations} 
\label{eqn:sampledProblem}
\begin{align} 
 &\min_{\substack{\bx \in \reals^n, \by \in \reals^m}}\hspace{.2cm} f_0^{(k)}(\by) + \sum_{i = 1}^N f_i^{(k)}(\bx_i) \label{eq:obj_p0} \\
&\mathrm{subject\,to:~} \bx_i \in \cX_i^{(k)} , \, i = 1, \ldots, N \label{eqn:constr_Xsampl} \\
& \hspace{1.9cm} \by = \bh^{(k)} (\bx) \label{eqn:constr_sys} \\
& \hspace{1.9cm} g_j^{(k)}(\by)  \leq 0 , \, j = 1, \ldots, M \label{eq:ineqconst}
 \end{align}
\end{subequations}
where $\cX_i^{(k)} \subset \mathbb{R}^{n_i}$, with $\sum_{i = 1}^N n_i = n$, are convex sets defining input  constraints for system $i$; and $\by = \bh^{(k)} (\bx)$ is the algebraic model for the networked system output. This paper mostly focuses on the linear (or linearized) case
\begin{equation} \label{eq:sys}
\bh^{(k)}(\bx) := \bC \bx + \bU \bw^{(k)} \in \reals^m    
\end{equation}
 where $\bC \in \mathbb{R}^{m \times n}$ and $\bU \in \mathbb{R}^{m \times w}$ are given model parameters, and $\bw^{(k)} \in \mathbb{R}^{w}$ is a vector of time-varying exogenous (uncontrollable) inputs. 
For example, in the power systems case considered in Section \ref{sec:power_systems}, $\bh^{(k)}(\bx)$ is the linear power-flow model.

In \eqref{eqn:sampledProblem}, $f_0^{(k)}(\bh^{(k)}(\bx))$ is a convex function (in terms of $\bx$) defining the objective in terms of the outputs; and 
 $f_i^{(k)}(\bx_i)$ are convex functions defining the objectives of subsystems in terms of their input. Finally,  $g_j^{(k)}(\bh^{(k)}(\bx))$ are  convex (in terms of $\bx$) output-constraints functions. 

To simplify notation, we let:
\[
\begin{aligned}
\bg^{(k)}(\bh^{(k)}(\bx))  & := \left[g_1^{(k)}(\bh^{(k)}(\bx)), \ldots, g_M^{(k)}(\bh^{(k)}(\bx))\right]^\sfT
\\[.5em]
\phi^{(k)}(\bx) & := \sum_i f_i^{(k)}(\bx_i) 
\\
f^{(k)} (\bx) & := \phi^{(k)}(\bx) + f_0^{(k)}(\bh^{(k)} (\bx)). 
\end{aligned} 
\]
Let $\blambda \in \mathbb{R}_+^M$ denote the vector of dual variables associated with~\eqref{eq:ineqconst}, and 
\begin{align}
\cL^{(k)}(\bx, \blambda) & :=f^{(k)} (\bx) + \blambda^\sfT\bg^{(k)}(\bh^{(k)}(\bx)) \, 
\label{eqn:lagrangian}
\end{align}
denote the associated Lagrangian function. 
The \emph{regularized} Lagrangian is given by \cite{Koshal11}:
\begin{align} \label{eq:lagrangian_reg}
\cL^{(k)}_{p,d}(\bx, \blambda) := \cL^{(k)}(\bx, \blambda) + \frac{p}{2}\|\bx\|_2^2 - \frac{d}{2}\|\blambda\|_2^2 
\end{align}
for some $p > 0$ and $d > 0$. 

Consider then the saddle-point problem:
\begin{align} \label{eq:minmax}
\max_{{\footnotesize \blambda} \in \cD^{(k)}} \min_{\bx \in \cX^{(k)}} \cL^{(k)}_{p,d}(\bx, \blambda) \, \hspace{.5cm} k \in \mathbb{N}
\end{align}     
where $\cX^{(k)} := \cX_1^{(k)} \times \ldots \times \cX_N^{(k)}$ and $\cD^{(k)}$ is a convex and compact set that contains the optimal dual variable of \eqref{eq:minmax}  \cite{bernstein2019}. We interpret the optimal trajectory $\bz^{(*, k)} := \{\bx^{(*,k)}, \blambda^{(*,k)}\}_{k\in \mathbb{N}}$ of \eqref{eq:minmax} as the \emph{reference} trajectory which the online algorithms are designed to track. 
Since  $\cL^{(k)}_{p,d}(\bx, \blambda)$ is strongly convex in $\bx$ and strongly concave in $\blambda$ by design, the  optimizer $\bz^{(*, k)}$ of~\eqref{eq:minmax}  is unique. 
However, it typically would be different from any saddle point of the Lagrangian $\cL^{(k)}(\bx, \blambda)$ (and therefore from any solution of \eqref{eqn:sampledProblem}); the difference can be bounded as, e.g., in \cite{Koshal11}, as a function of the regularization parameters $p$ and $d$.

The following assumptions are typically imposed\footnote{We list these assumptions here for completeness; not all the assumptions are necessary for specific convergence results.} on the different elements of \eqref{eqn:sampledProblem} to establish convergence/tracking guarantees.

\begin{assumption} 
\label{ass:constqualification}
Slater's condition holds at each time step $k$. 
\end{assumption}

\begin{assumption} 
\label{ass:cost}
The functions $f_0^{(k)}(\by)$ and $f^{(k)}_i(\bx_i)$ are convex and continuously differentiable. The  maps $\nabla \phi^{(k)}(\bx)$, $\nabla f_0^{(k)}(\by)$, and $\nabla^2 f_0^{(k)}(\by)$  are Lipschitz continuous.
\end{assumption}
\begin{assumption} 
\label{ass:nonlinearconstr}
For each $j = 1, \ldots, M$, the function $g_j^{(k)}(\by)$ is convex and continuously differentiable.  Moreover, $\nabla g_j^{(k)}(\by)$ and $\nabla^2 g_j^{(k)}(\by)$  are Lipschitz continuous. 
\end{assumption}

\begin{assumption} \label{ass:grad_var}
There exists a constant $e_f$ such that for all $k$ and $\bx, \by$:
\begin{align}
    \|\nabla f_0^{(k)}(\by) - \nabla f_0^{(k-1)}(\by)\| &\leq e_f, \nonumber \\
    \|\nabla \phi^{(k)}(\bx) - \nabla \phi^{(k-1)}(\bx)\|  &\leq e_f,   \nonumber \\
    \| \nabla g_j^{(k)}(\by) - \nabla g_j^{(k-1)}(\by)\| &\leq e_f, \quad j = 1, \ldots, M. \nonumber
\end{align}
\end{assumption}

\subsection{Feedback Primal-Dual Methods}
\label{sec:pri-dual}

To track the reference trajectory $\{\bz^{(*, k)}\}$, a feedback-based primal-dual method is typically used in the OFO framework \cite{opfPursuit,bernstein2019}:
\begin{subequations} 
\label{eq:primalDualFeedback}
\begin{align}
\bx^{(k+1)} & = \proj_{\cX^{(k)}}\Big\{\bx^{(k)} - \alpha \hat{\nabla}_\bx\cL_{p,d}^{(k)} \Big\}  \label{eq:primalstep} \\
\blambda^{(k+1)} & = \proj_{\cD^{(k)}}\Big\{ \blambda^{(k)} + \alpha \hat{\nabla}_\lambda\cL_{p,d}^{(k)} \Big\} \label{eq:dualstep}
\end{align}
\end{subequations}
where $\alpha > 0$ is a constant step size, and  $\hat{\nabla}_\bx\cL_{p,d}^{(k)}$ and $\hat{\nabla}_\lambda\cL_{p,d}^{(k)}$ are feedback-based proxies for the  gradients of the regularized Lagrangian (cf.~\eqref{eq:grad_proxy1} and \eqref{eq:grad_proxy2}). In particular, if the system Jacobian matrix $\bC$ (or estimation thereof) is used, we have that
\begin{subequations} 
\label{eq:feedbackGradients}
\begin{align}
\hat{\nabla}_\bx\cL_{p,d}^{(k)} & =   \nabla_\bx \phi^{(k)}(\bx^{(k)}) + \bC^\sfT\nabla_\by f_0^{(k)}(\hat{\by}^{(k)} )  \nonumber \\  
& \hspace{.4cm}
 + \sum_{j = 1}^M \lambda_j^{(k)}\bC^\sfT\nabla g_j^{(k)}(\hat{\by}^{(k)} ) + p \bx^{(k)} \label{eq:feedbackGradientsPrimal}\\
\hat{\nabla}_\lambda\cL_{p,d}^{(k)} & = \bg^{(k)}(\hat{\by}^{(k)} ) - d \blambda^{(k)} \label{eq:feedbackGradientsDual}
\end{align}
\end{subequations}
where $\hat{\by}^{(k)}$ is a  measurement of $\bh^{(k)}(\bx^{(k)})$. Alternatively, a zero-order  proxy can be used for $\hat{\nabla}_\bx\cL_{p,d}^{(k)}$ instead of \eqref{eq:feedbackGradientsPrimal}. For example, using a two-point gradient estimation \eqref{eq:grad_proxy2}, we have \cite{chen2020}:
\begin{align}
    \hat{\nabla}_\bx\cL_{p,d}^{(k)} &:= \nabla_\bx \phi^{(k)}(\bx^{(k)}) + p \bx^{(k)}\nonumber\\
    &\quad + \frac{1}{2 \epsy}\bfprobe^{(k)}\left[f_0^{(k)}(\hat{\by}^{(k)}_+ ) -f_0^{(k)}(\hat{\by}^{(k)}_- )  \right] \label{eq:zeroOrderGradients}\\
    &\quad + \frac{1}{2 \epsy} \bfprobe^{(k)}(\blambda^{(k)})^\sfT \left[\bg^{(k)}(\hat{\by}^{(k)}_+ ) - \bg^{(k)}(\hat{\by}^{(k)}_- )  \right], \nonumber
\end{align}
 where $\{\bfprobe^{(k)}\}$ is an exploration process (either deterministic \cite{bernstein2019a} or stochastic \cite{spa92}); 
$\epsy > 0$ is a (small) scalar controlling the magnitude of exploration; and  $\hat{\by}^{(k)}_\pm$ are the measurements of the system output at 
 $\bx^{(k)}_\pm := \bx^{(k)} \pm \epsy \bfprobe^{(k)}$. 

 In this paper, we are focusing on the \emph{distributable} case that is quantified by the following assumption.

 \begin{assumption} \label{as:distr}
     The target optimization problem is distributable in the sense that the associated Lagrangian function, and regularizations and approximations thereof, are decomposable across different subsystems, i.e.:
     \begin{align*}
 \nabla_{\bx_i} \cL^{(k)}_{p,d}(\bx, \blambda) &= \nabla_{\bx_i} \cL^{(k)}_{i, p,d}(\bx_i, \blambda)
     \end{align*}
     for all $i$. 
 \end{assumption}
Under Assumption \ref{as:distr}, the primal step of the OFO algorithm \eqref{eq:primalstep} can be written as
\begin{align} \label{eq:algo_distr}
\bx^{(k+1)}_i & = \proj_{\cX_i^{(k)}}\Big\{\bx_i^{(k)} - \alpha \hat{\nabla}_{\bx_i}\cL_{p,d}^{(k)} \Big\}, \quad i = 1, \ldots, N. 
\end{align}

Note that \eqref{eq:algo_distr} and \eqref{eq:dualstep} can be implemented in a distributed, gather-and-broadcast approach, wherein \eqref{eq:dualstep} is computed centrally and \eqref{eq:algo_distr} is computed locally at each subsystem; see, e.g., \cite{unified} for details.

\subsection{Key Elements of Convergence Analysis for Time-Varying OFO} \label{sec:key-elems}

In this section, we present typical results for tracking the reference trajectory $\{\bz^{(*, k)}\}$ and discuss the key elements in the their proofs. 
Let $e_y$ denote the worst-case error in the output feedback measurement. Then the following result holds \cite{bernstein2019,chen2020}.

\begin{theorem} \label{thm:conv} Let $\bz^{(k)} := \{\bx^{(k)}, \blambda^{(k)}\}_{k\in \mathbb{N}}$.
Under Assumptions 1-4, there exists $\bar{\alpha} > 0$ such that if the step size $\alpha$ satisfies $0 < \alpha  < \bar{\alpha}$, the sequence  $\{\bz^{(k)}\}$ converges Q-linearly to $\{\bz^{(*,k)}\}$ up to an asymptotic error bound given by:
\begin{align} 
\limsup_{k\to\infty} \|\bz^{(k)} - \bz^{(*, k)}\|_2 &\leq \frac{\alpha  \epsy_\phi + \sigma}{1 - c}  \label{eqn:asym_bound_apr}
\end{align}
where
$c < 1$,
$\sigma \eqdef \max_k \|\bz^{(*, k+1)} - \bz^{(*, k)}\|_2$, and $\epsy_\phi = O(e_y)$ when the estimated gradient \eqref{eq:feedbackGradients} is used; or
    $\epsy_\phi =  O(\alpha + \epsy^2 + e_f + e_y)$ when the estimated gradient \eqref{eq:zeroOrderGradients} is used.
\end{theorem}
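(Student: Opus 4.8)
The plan is to recast the primal-dual scheme \eqref{eq:primalDualFeedback} as a perturbed fixed-point iteration driven by a contraction, the contraction arising from the strong convexity-concavity injected by the regularization in \eqref{eq:lagrangian_reg}. Stacking $\bz = (\bx, \blambda)$ and introducing the saddle-point operator
\begin{align}
\bPhi^{(k)}(\bz) \eqdef \begin{pmatrix} \nabla_\bx \cL^{(k)}_{p,d}(\bx,\blambda) \\ -\nabla_\blambda \cL^{(k)}_{p,d}(\bx,\blambda) \end{pmatrix}, \nonumber
\end{align}
the exactly-evaluated, noise-free version of the update is $T^{(k)}(\bz) \eqdef \proj_{\cX^{(k)} \times \cD^{(k)}}\{ \bz - \alpha \bPhi^{(k)}(\bz) \}$, whose unique fixed point is precisely the reference point $\bz^{(*,k)}$ of \eqref{eq:minmax}. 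First I would show that $\bPhi^{(k)}$ is $\mu$-strongly monotone with $\mu = \min(p,d)$. The Jacobian of $\bPhi^{(k)}$ is block structured with a positive semidefinite primal block $\nabla^2_{\bx\bx}\cL^{(k)} \succeq 0$ (by the convexity in Assumptions \ref{ass:cost}-\ref{ass:nonlinearconstr}), a zero dual block, and off-diagonal coupling blocks that are negative transposes of one another; the skew coupling therefore cancels in the symmetric part, so after adding the regularizers the symmetric part dominates $\min(p,d)\bI$.

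Next I would establish global Lipschitz continuity of $\bPhi^{(k)}$ with a constant $L$. Compactness of $\cD^{(k)}$ is essential here, since the primal-dual coupling term $\sum_j \lambda_j \bC^\sfT \nabla g_j$ scales with $\|\blambda\|$, while the Lipschitz continuity of the gradients and Hessians in Assumptions \ref{ass:cost}-\ref{ass:nonlinearconstr} controls the remaining terms. The standard estimate for a $\mu$-strongly monotone, $L$-Lipschitz operator, combined with nonexpansiveness of the projection, then gives that $T^{(k)}$ is a contraction,
\begin{align}
\|T^{(k)}(\bz_1) - T^{(k)}(\bz_2)\|_2 \leq c\, \|\bz_1 - \bz_2\|_2, \qquad c \eqdef \sqrt{1 - 2\alpha\mu + \alpha^2 L^2}, \nonumber
\end{align}
with $c < 1$ exactly when $0 < \alpha < \bar\alpha \eqdef 2\mu / L^2$. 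This uniform-in-$k$ contraction is the crux of the argument and, I expect, the main obstacle: because $\bPhi^{(k)}$ is the operator of a min-max problem rather than the gradient of a single potential, monotonicity must be argued through the symmetric part of its Jacobian (equivalently, the monotone-operator / variational-inequality framework), and one must verify that the skew primal-dual coupling neither erodes the strong monotonicity supplied by $p$ and $d$ nor prevents a finite Lipschitz constant over the compact dual set.

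The remaining steps are routine bookkeeping. Writing the realized iterate as $\bz^{(k+1)} = \proj_{\cX^{(k)}\times\cD^{(k)}}\{\bz^{(k)} - \alpha\hat{\bPhi}^{(k)}(\bz^{(k)})\}$ and letting $\be^{(k)} \eqdef \hat{\bPhi}^{(k)}(\bz^{(k)}) - \bPhi^{(k)}(\bz^{(k)})$ denote the gradient-proxy error, nonexpansiveness of the projection and the triangle inequality yield $\|\bz^{(k+1)} - \bz^{(*,k)}\|_2 \leq c\|\bz^{(k)} - \bz^{(*,k)}\|_2 + \alpha\|\be^{(k)}\|_2$. Introducing the drift $\|\bz^{(*,k+1)} - \bz^{(*,k)}\|_2 \leq \sigma$ through the triangle inequality and bounding $\|\be^{(k)}\|_2 \leq \epsy_\phi$ yields the one-step recursion
\begin{align}
\|\bz^{(k+1)} - \bz^{(*,k+1)}\|_2 \leq c\,\|\bz^{(k)} - \bz^{(*,k)}\|_2 + \alpha\epsy_\phi + \sigma, \nonumber
\end{align}
and unrolling this geometric recursion and taking $\limsup_{k\to\infty}$ delivers \eqref{eqn:asym_bound_apr} with Q-linear rate $c$. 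It then remains to certify $\epsy_\phi$ in each case: for the model-based proxy \eqref{eq:feedbackGradients}, propagating the measurement error $e_y$ through the Lipschitz gradients gives $\epsy_\phi = O(e_y)$; for the zero-order proxy \eqref{eq:zeroOrderGradients}, a second-order Taylor expansion of the symmetric two-point estimate produces the $O(\epsy^2)$ bias, the temporal variation between the two probing measurements contributes $O(e_f)$ via Assumption \ref{ass:grad_var}, the measurement noise contributes $O(e_y)$, and the primal displacement during probing contributes $O(\alpha)$. A final subtlety worth flagging is that the feasible sets $\cX^{(k)}, \cD^{(k)}$ move with $k$, so the fixed-point characterization of $\bz^{(*,k)}$ and the drift bound $\sigma$ must be shown to remain valid as the projection domains themselves vary.
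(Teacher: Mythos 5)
Your proposal follows essentially the same route as the paper: the paper's own proof (Section \ref{sec:key-elems}) is a sketch invoking standard time-varying fixed-point iteration analysis \cite{fixedPoints} built on exactly the two properties you establish, namely that the unique fixed point of the projected primal-dual map coincides with the saddle point $\bz^{(*,k)}$ of \eqref{eq:minmax}, and that the primal-dual operator $\Phi^{(k)}$ is strongly monotone (which, together with Lipschitz continuity over the compact dual set, makes the projected map a contraction). Your contraction constant, the perturbed recursion with drift $\sigma$ and proxy error $\epsy_\phi$, and the case-by-case accounting of $\epsy_\phi$ are precisely the details the paper defers to \cite{bernstein2019,chen2020}, and they are correct.
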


To prove such a theorem, standard time-varying fixed-point iteration analysis is typically used; see, e.g., \cite{fixedPoints}. To this end, the following two properties of the primal-dual operator
\begin{align}
\Phi^{(k)}(\bz) &\eqdef \left[\begin{array}{c}
 \nabla_{\bx}  \cL^{(k)}_{p, d}(\bz) \\
 - \nabla_{\tiny \blambda}  \cL^{(k)}_{p, d}(\bz)
\end{array}
\right], \quad \bz := (\bx, \blambda)  
\end{align}
are  established.

\subsubsection{Fixed-Point of the Algorithm is the Saddle Point}

Consider the fixed-point equation associated with the standard primal-dual algorithm applied to $\cL^{(k)}_{p, d}$:
\begin{align}
    \bz = & \proj_{\cX^{(k)} \times \cD^{(k)}}\Big\{ \bz - \alpha \Phi^{(k)}(\bz)  \Big\}.
    \label{eq:fixedPoint}
\end{align}
It is well-known \cite{opfPursuit, bernstein2019} that under the assumptions in this paper, the unique fixed point of \eqref{eq:fixedPoint} coincides with the unique saddle point of \eqref{eq:minmax}. 

\subsubsection{Strong Monotonicity of $\Phi^{(k)}$}
Under the assumptions in this paper, the operator $\Phi^{(k)}$ is strongly monotone \cite{opfPursuit, bernstein2019}. Namely, there exists $\eta > 0$ such that
\begin{equation}
    \left(\Phi^{(k)} (\bx_1) - \Phi^{(k)} (\bx_2) \right)^\sfT (\bx_1 - \bx_2) \geq \eta \|\bx_1 - \bx_2 \|_2^2
\end{equation}
for all $\bx_1, \bx_2$ and all $k$.

\section{Algorithms with Heterogeneous Step Sizes} \label{sec:hetero}

In this section, we illustrate the fact that even in the simple case of (primal) projected gradient method, heterogeneous step sizes might lead to convergence to a suboptimal point. We then discuss the conditions under which convergence to the optimum can be guaranteed and extend the results to primal-dual methods. Most of the analysis in this section is restricted to target optimization problems \eqref{eqn:sampledProblem} which can be written in the QP form\footnote{In this section, we omit the time variability of the optimization problem for brevity and focus on static problems; however, the general framework applies to time-varying problems as mentioned in Section \ref{sec:conv-ofo-hetero}.}:
\begin{subequations} 
\label{eq:QP}
\begin{align} 
 &\min_{\substack{\bx \in \cX}}\hspace{.2cm} f(\bx) \label{eq:QP_obj} \\
&\mathrm{subject\,to:~} \bg(\bx) \leq 0 \label{eq:QP_constr} 
 \end{align}
\end{subequations}
where
\begin{equation}
    f(\bx) = \frac{1}{2}\bx^\sfT \bA \bx + \bb^\sfT \bx + c, \quad \bA \succcurlyeq 0
    \label{eq:QP_objfun}
\end{equation}
and
\begin{equation}
    \bg(\bx) = \bD \bx + \bd.
\end{equation}
We note that QPs capture a large set of practical problems that are of interest in our application domains. For example, the linearized optimal power flow problem discussed in Section \ref{sec:power_systems} is a QP.
Extension of the results to more general problems is left for future research.

\subsection{Projected Gradient Method}

We first consider the case where the constraint \eqref{eq:QP_constr} is absent and analyze the convergence of the projected gradient algorithm
\begin{align}
\bx^{(k+1)} = & \proj_{\cX}\Big\{ \bx^{(k)} - \alpha \bGamma^{(k)}  \nabla f (\bx^{(k)})  \Big\},
\end{align}
where $\bGamma^{(k)} $ is a diagonal positive definite matrix of step sizes and $\alpha > 0$ is the step-size scaling factor. We next discuss the two key properties for convergence analysis presented in Section \ref{sec:key-elems}.

\subsubsection{Optimality of Fixed Points}
Note that without the projection, this algorithm converges to a solution of \eqref{eq:QP} under standard conditions on the scaling factor $\alpha$ (e.g., using the analysis in \cite{fixedPoints}). However, this is not the case with the projection in general. Specifically, the set of fixed points of 
\begin{align}
\bx = & \proj_{\cX}\Big\{ \bx - \alpha \bGamma^{(k)} \nabla f (\bx)  \Big\},
\end{align}
does not necessarily coincide with the set of optimal points, as the following example shows.


\begin{example}
    Suppose that $\cX:=\Big\{\bx\in\mathbb{R}^2:\mathbf{1}^\sfT\bx\geq 8\Big\}$, $\bA:=\bI$, $\bb:=\mathbf{0}$, and $c:=0$.
    If $\bGamma^{(k)} (\bx):=\bI$, then the fixed point is $[4~~4]^\sfT$, and is on the boundary of $\cX$ with the gradient of the objective function being $[4~~4]^\sfT$.
    It also happens to be optimal with the objective value of 16.
    However, if instead $\bGamma^{(k)}$ is slightly perturbed away from $\bI$, i.e., $\bGamma^{(k)} (\bx):=\diag([\frac{3}{4}~~\frac{5}{4}]^\sfT)$, then the fixed point is $[5~~3]^\sfT$, and is on the boundary of $\cX$ with the gradient of the objective function being $[5~~3]^\sfT$.  Also, it is not optimal with the objective value of 17. \qed
\end{example}

To tackle this, we slightly redefine the projected gradient algorithm as follows:
\begin{align}
\bx^{(k+1)} = & \proj_{\cX}\Big\{ \bx^{(k)} - \alpha \bGamma^{(k)} (\bx^{(k)})  \nabla f (\bx^{(k)})  \Big\},
\end{align}
where $\bGamma^{(k)} (\bx)$ is now defined via:
\begin{align}
    \bGamma^{(k)} (\bx) := \begin{cases}
 	    \diag(\bgamma^{(k)}) & \text{if  $\bx - \alpha \diag(\bgamma^{(k)}) \nabla f (\bx) \in \cX,  $ }\\
 	    \bI & \text{otherwise}. \label{eq:Gamma_k}
       \end{cases}  
\end{align}
We then have the following.
\begin{proposition} \label{prop:fixed_points}
    For each $k$, the set of fixed points of 
  \begin{align} \label{eq:fixed_points_Gamma}
\bx = & \proj_{\cX}\Big\{ \bx - \alpha \bGamma^{(k)} (\bx) \nabla f (\bx)  \Big\}
\end{align}  
coincides with the set of fixed points of 
  \begin{align} \label{eq:fixed_points}
\bx = & \proj_{\cX}\Big\{ \bx - \alpha  \nabla f (\bx)  \Big\}.
\end{align}  
Therefore, the set of fixed points of \eqref{eq:fixed_points_Gamma} coincides with the set of the optimal solutions to $\min_{\bx \in \cX} f(\bx)$.
\end{proposition}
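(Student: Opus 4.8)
The plan is to prove the two set-equalities in turn, handling the harder first one by a case analysis dictated by the very definition of $\bGamma^{(k)}(\bx)$ in \eqref{eq:Gamma_k}. I would write $T_\Gamma(\bx) := \proj_{\cX}\{\bx - \alpha\bGamma^{(k)}(\bx)\nabla f(\bx)\}$ and $T_I(\bx) := \proj_{\cX}\{\bx - \alpha\nabla f(\bx)\}$, abbreviate $\bg := \nabla f(\bx)$, and invoke the variational characterization of the projection: $\proj_{\cX}(\bz) = \bx$ holds iff $\bx \in \cX$ and $(\bz - \bx)^\sfT(\by - \bx) \le 0$ for every $\by \in \cX$. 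Since any fixed point of either map automatically lies in $\cX$, the task reduces to comparing the two inequality conditions. The second claim, that $\mathrm{Fix}(T_I)$ equals the minimizer set of $\min_{\bx\in\cX} f(\bx)$, is the standard equivalence between the projected-gradient fixed-point condition and the first-order variational inequality $\bg^\sfT(\by-\bx)\ge 0$ for all $\by\in\cX$, which for the convex $f$ of \eqref{eq:QP_objfun} is necessary and sufficient for global optimality; I would simply state it and cite convexity.

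For the first equality I fix $k$ and split on the branch of \eqref{eq:Gamma_k}. In the ``otherwise'' branch, where $\bx - \alpha\diag(\bgamma^{(k)})\bg \notin \cX$, the definition gives $\bGamma^{(k)}(\bx) = \bI$, so $T_\Gamma(\bx) = T_I(\bx)$ pointwise and the fixed-point conditions are literally identical, leaving nothing to do. The substantive branch is $\bx - \alpha\diag(\bgamma^{(k)})\bg \in \cX$, where $\bGamma^{(k)}(\bx) = \diag(\bgamma^{(k)})$. Here the argument of the projection already lies in $\cX$, so the projection acts as the identity and $T_\Gamma(\bx) = \bx - \alpha\diag(\bgamma^{(k)})\bg$; consequently $T_\Gamma(\bx) = \bx$ holds iff $\diag(\bgamma^{(k)})\bg = \mathbf{0}$, i.e. iff $\bg = \mathbf{0}$, using $\diag(\bgamma^{(k)})\succ 0$ and $\alpha>0$. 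It then remains to show that, within this branch, $T_I(\bx) = \bx$ is also equivalent to $\bg = \mathbf{0}$.

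The one nontrivial implication, and the step I expect to be the main obstacle, is showing $T_I(\bx)=\bx \Rightarrow \bg = \mathbf{0}$ inside the substantive branch; the reverse implication is immediate. The trick is to feed the branch hypothesis into the variational inequality: $T_I(\bx)=\bx$ gives $\bg^\sfT(\by-\bx)\ge 0$ for all $\by\in\cX$, and I would test it at the specific feasible point $\by := \bx - \alpha\diag(\bgamma^{(k)})\bg$, which lies in $\cX$ precisely by the branch assumption. This yields $-\alpha\,\bg^\sfT\diag(\bgamma^{(k)})\bg \ge 0$, forcing the positive-definite quadratic form $\bg^\sfT\diag(\bgamma^{(k)})\bg$ to vanish and hence $\bg = \mathbf{0}$. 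Combining the two branches gives $T_\Gamma(\bx)=\bx \iff T_I(\bx)=\bx$ for every $\bx$, i.e. $\mathrm{Fix}(T_\Gamma)=\mathrm{Fix}(T_I)$, and chaining with the standard second claim completes the proof. The conceptual point worth emphasizing is that \eqref{eq:Gamma_k} is engineered so that the heterogeneous step sizes are applied only on ``interior'' steps where no clipping occurs, and on such steps a fixed point can sit only at a genuine stationary point.
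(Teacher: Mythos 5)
Your proposal is correct and follows essentially the same route as the paper: the same case split driven by the branch of $\bGamma^{(k)}(\bx)$ in \eqref{eq:Gamma_k}, with the ``otherwise'' branch making the two maps coincide and the substantive branch reducing both fixed-point conditions to $\nabla f(\bx) = \mathbf{0}$. Your variational-inequality test at $\by = \bx - \alpha\,\diag(\bgamma^{(k)})\nabla f(\bx)$ is precisely the contrapositive (and the omitted proof) of the paper's Lemma \ref{lem:opt}, which the paper invokes as a black box in the direction from \eqref{eq:fixed_points} to \eqref{eq:fixed_points_Gamma}.
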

The proof  leverages the following Lemma that follows from the standard arguments of optimiality of convex problems.
\begin{lemma} \label{lem:opt}
If $\bx^* \in \arg \min_{\bx \in \cX} f(\bx)$ with $\nabla f(\bx^*) \neq \mathbf{0}$, then:
\begin{align}
    \bx^* - \alpha \bGamma \nabla f(\bx^*) \notin \cX
\end{align}
for any diagonal positive definite matrix $\bGamma$ and any $\alpha > 0$.
\end{lemma}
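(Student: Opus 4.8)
The plan is to invoke the first-order (variational inequality) optimality condition for minimizing a convex function over a convex set, and then reach a contradiction using positive definiteness of $\bGamma$. First I would recall that, since $f$ is convex and differentiable and $\cX$ is convex, a point $\bx^*$ minimizes $f$ over $\cX$ if and only if $\nabla f(\bx^*)^\sfT (\bx - \bx^*) \geq 0$ for all $\bx \in \cX$. In particular, because $\nabla f(\bx^*) \neq \mathbf{0}$ by hypothesis, this condition forces $\bx^*$ to lie on the boundary of $\cX$ (an interior minimizer would require a vanishing gradient).

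Next I would argue by contradiction. Suppose that $\bz := \bx^* - \alpha \bGamma \nabla f(\bx^*)$ belongs to $\cX$. Applying the variational inequality with the feasible point $\bx = \bz$ gives $\nabla f(\bx^*)^\sfT (\bz - \bx^*) \geq 0$. Substituting $\bz - \bx^* = -\alpha \bGamma \nabla f(\bx^*)$ then yields $-\alpha \, \nabla f(\bx^*)^\sfT \bGamma \nabla f(\bx^*) \geq 0$.

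Finally, since $\alpha > 0$ and $\bGamma$ is diagonal and positive definite, the weighted quadratic form $\nabla f(\bx^*)^\sfT \bGamma \nabla f(\bx^*)$ is strictly positive — this is exactly where $\nabla f(\bx^*) \neq \mathbf{0}$ is used a second time. Hence the left-hand side is strictly negative, contradicting the inequality just derived. Therefore $\bz \notin \cX$, which is the claim.

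There is essentially no hard step here: the whole argument reduces to the variational characterization of the constrained minimizer combined with strict positivity of the $\bGamma$-weighted norm. The only subtlety worth flagging is that the assumption $\nabla f(\bx^*) \neq \mathbf{0}$ is indispensable and is invoked twice, both to place $\bx^*$ on the boundary and to guarantee the quadratic form is strictly positive; dropping it would allow an interior minimizer whose displaced point stays feasible, so the conclusion would genuinely fail.
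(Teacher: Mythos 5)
Your proof is correct and is exactly the argument the paper has in mind: the paper gives no explicit proof of this lemma, stating only that it ``follows from the standard arguments of optimality of convex problems,'' and your variational-inequality-plus-contradiction argument (feasibility of $\bx^* - \alpha \bGamma \nabla f(\bx^*)$ would force $-\alpha\, \nabla f(\bx^*)^\sfT \bGamma \nabla f(\bx^*) \geq 0$, impossible since $\bGamma \succ 0$ and $\nabla f(\bx^*) \neq \mathbf{0}$) is precisely that standard argument spelled out. One cosmetic remark: your opening observation that $\bx^*$ must lie on the boundary of $\cX$ is not actually used in the contradiction, so the hypothesis $\nabla f(\bx^*) \neq \mathbf{0}$ is really needed only once, to make the quadratic form strictly positive.
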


\begin{proof}[Proof of Proposition \ref{prop:fixed_points}]
    Let $\bx^*$ satisfy \eqref{eq:fixed_points_Gamma}. If $\bx^* - \alpha \diag(\bgamma_k) \nabla f (\bx^*) \in \cX$ then $\bx^* = \bx^* - \alpha \diag(\bgamma_k) \nabla f (\bx^*) $ and $\nabla f(\bx^*) = \mathbf{0}$. If $\bx^* - \alpha \diag(\bgamma_k) \nabla f (\bx^*) \notin \cX$, then $\bx^* =  \proj_{\cX}\Big\{ \bx^* - \alpha  \nabla f (\bx^*)  \Big\}$. Therefore, $\bx^*$ satisfies \eqref{eq:fixed_points}.
   
    Now let $\bx^*$ satisfy \eqref{eq:fixed_points}. If $\nabla f(\bx^*) = \mathbf{0}$, $\bx^*$ clearly satisfies \eqref{eq:fixed_points_Gamma}. If $\nabla f(\bx^*) \neq \mathbf{0}$, then using Lemma \ref{lem:opt}, we have that $\bx^* - \alpha  \diag(\bgamma^{(k)})  \nabla f(\bx^*) \notin \cX$, and $\bx^*$  satisfies \eqref{eq:fixed_points_Gamma} with $\bGamma^{(k)}(\bx^*) = \bI$.
\end{proof}


\subsubsection{Strong Monotonicity}

Consider the operator
\begin{equation}
    \Phi(\bx) \eqdef \bGamma  \nabla f (\bx). 
\end{equation}
The following example shows that $\Phi(\bx)$ is not necessarily monotone even in the case of quadratic and strongly convex objective \eqref{eq:QP_objfun}.

\begin{example}
    Suppose that $f(\bx) = \frac{1}{2}\bx^\sfT \bA \bx$ for some positive definite and symmetric matrix $\bA$. Clearly, $f$ is strongly convex and we have
    \[
    \Phi(\bx) = \bGamma \bA \bx.
    \]
    To check if $\Phi(\bx)$ is strongly monotone, we need to establish at least the monotonicity of $\Phi(\bx)$ in the sense that for any $\bx_1,\bx_2$, 
    \[
    (\Phi(\bx_1) - \Phi(\bx_2))^\sfT (\bx_1 - \bx_2) \geq 0
    \]
    which is equivalent to
    \[
    (\bx_1 - \bx_2)^\sfT (\bA \bGamma)(\bx_1 - \bx_2) \geq 0.
    \]
    The latter holds true if and only if $0.5(\bGamma \bA + \bA\bGamma  )$ is positive semidefinite. However, this is not the case in general. For example, consider
    \[
    \bA = \begin{bmatrix}
        2 & -1 \\ -1 & 2
    \end{bmatrix}, \,
    \bGamma = \begin{bmatrix}
        \delta & 0 \\ 0 & 1
    \end{bmatrix}.
    \]
    If $\delta = 20$, we have that
    \[
    \frac{1}{2} \left(\bGamma \bA + \bA\bGamma \right) = \begin{bmatrix}
        40 & -10.5 \\ -10.5 & 2
    \end{bmatrix}
    \]
    with eigenvalues $42.7$ and $-0.7$. In this case, the more unbalanced $\bGamma$ is (namely, the larger $\delta$ is), the smaller is the second eigenvalue. Specifically, for $\delta \geq 14$, the matrix $0.5(\bGamma \bA + \bA\bGamma  )$ becomes indefinite.
  \qed
\end{example}

To enforce strong monotonicity of $\Phi(x)$, we use the regularization approach similar to \eqref{eq:lagrangian_reg}. In particular, we consider the following regularized objective function:
\begin{equation}
    \tilde{f}(\bx) := f(\bx) + \frac{p}{2} \bx^\sfT \bGamma^{-1} \bx 
\label{eq:reg_obj}
\end{equation}
for some $p > 0$. Consider the operator
\begin{align}
    \tilde{\Phi}(\bx) &\eqdef \bGamma  \nabla \tilde{f} (\bx) \\
    &= (\bGamma \bA +p \bI) \bx + \bGamma \bb.
\end{align}
We then have the following result to esnure strong monotonicty of $\tilde{\Phi}$.
\begin{proposition} \label{prop:sm-gd}
Let
\begin{equation}
    \bM \eqdef \frac{1}{2} \Big (\bGamma \bA + \bA \bGamma  \Big)
\end{equation}
and let $\lambda_{\min}$ be the smallest eigenvalue of $\bM$. If $p > \max \{0, -\lambda_{\min} \}$, then $\tilde{\Phi}$ is strongly monotone with parameter $\eta := p + \min \{0, \lambda_{\min} \}$.
\end{proposition}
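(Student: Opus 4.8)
The plan is to exploit that $\tilde{\Phi}$ is an \emph{affine} operator, so that strong monotonicity collapses to a single quadratic-form inequality. First I would form the difference $\tilde{\Phi}(\bx_1) - \tilde{\Phi}(\bx_2) = (\bGamma\bA + p\bI)(\bx_1 - \bx_2)$ using the closed form $\tilde{\Phi}(\bx) = (\bGamma\bA + p\bI)\bx + \bGamma\bb$ established above. Setting $\bv := \bx_1 - \bx_2$, the defining inner product for strong monotonicity becomes
\begin{align}
\big(\tilde{\Phi}(\bx_1) - \tilde{\Phi}(\bx_2)\big)^\sfT(\bx_1-\bx_2) = \bv^\sfT\bGamma\bA\,\bv + p\|\bv\|_2^2, \nonumber
\end{align}
so the whole question reduces to lower-bounding the (generally nonsymmetric) form $\bv^\sfT\bGamma\bA\bv$.

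The second step is to symmetrize. Since a scalar equals its transpose, $\bv^\sfT\bGamma\bA\bv = \tfrac{1}{2}\bv^\sfT\big(\bGamma\bA + (\bGamma\bA)^\sfT\big)\bv$; and because $\bA$ is symmetric (recall $\bA\succcurlyeq 0$) and $\bGamma$ is diagonal, $(\bGamma\bA)^\sfT = \bA\bGamma$, so this form equals \emph{exactly} $\bv^\sfT\bM\bv$. As $\bM$ is symmetric by construction, it admits real eigenvalues, and the Rayleigh-quotient bound gives $\bv^\sfT\bM\bv \geq \lambda_{\min}\|\bv\|_2^2$ for every $\bv$. Combining the two steps yields $\big(\tilde{\Phi}(\bx_1) - \tilde{\Phi}(\bx_2)\big)^\sfT(\bx_1-\bx_2) \geq (p+\lambda_{\min})\|\bv\|_2^2$.

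The last step is a brief case analysis to recover the stated constant $\eta = p + \min\{0,\lambda_{\min}\}$. If $\lambda_{\min} \geq 0$ the cross term $\bv^\sfT\bM\bv$ is nonnegative, so the product is bounded below by $p\|\bv\|_2^2$, matching $\eta = p + \min\{0,\lambda_{\min}\} = p$; if $\lambda_{\min} < 0$ the bound $(p+\lambda_{\min})\|\bv\|_2^2$ applies verbatim, matching $\eta = p + \min\{0,\lambda_{\min}\} = p + \lambda_{\min}$. In either regime the hypothesis $p > \max\{0,-\lambda_{\min}\}$ forces $\eta > 0$, which is precisely strong monotonicity.

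I do not anticipate a genuine obstacle: at heart this is symmetrization plus a Rayleigh-quotient estimate for an affine map. The only places demanding care are (i) the symmetrization identity $(\bGamma\bA)^\sfT = \bA\bGamma$, which leans on symmetry of both $\bA$ and the diagonal $\bGamma$, and (ii) the deliberately conservative packaging of the eigenvalue bound as $p + \min\{0,\lambda_{\min}\}$ rather than the tight $p + \lambda_{\min}$, together with verifying that the assumed lower bound on $p$ keeps this constant strictly positive for both signs of $\lambda_{\min}$.
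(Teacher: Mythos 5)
Your proposal is correct and follows essentially the same route as the paper's proof: both reduce the affine difference to the quadratic form $\bv^\sfT(\bGamma\bA + p\bI)\bv$, symmetrize it to $\bv^\sfT(\bM + p\bI)\bv$, and apply the eigenvalue lower bound $\bM + p\bI \succcurlyeq \eta\bI$ (your Rayleigh-quotient step is the same fact, with the case analysis on the sign of $\lambda_{\min}$ made slightly more explicit).
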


\begin{proof}
    We have that
\begin{align}
&\left (\tilde{\Phi}(\bx_1) - \tilde{\Phi}(\bx_2) \right)^\sfT (\bx_1 - \bx_2) \notag \\
&= \left [ \left ( \bGamma \bA + p \bI \right ) (\bx_1 - \bx_2) \right]^\sfT (\bx_1 - \bx_2) \notag \\
&= (\bx_1 - \bx_2)^\sfT \left(  \bA \bGamma + p \bI \right )(\bx_1 - \bx_2) \notag\\
&= (\bx_1 - \bx_2)^\sfT \left(  \bM + p \bI \right )(\bx_1 - \bx_2). \label{eq:mono_pg}
\end{align}
If $p > \max \{0, -\lambda_{\min} \}$, the matrix $\bM + p \bI$ is positive definite, and specifically
\[
\bM + p \bI - \eta \bI \succcurlyeq 0
\]
for $\eta$ define above. Hence
\begin{align*}
 (\bx_1 - \bx_2)^\sfT \left(  \bM + p \bI - \eta \bI\right )(\bx_1 - \bx_2) \geq 0   
\end{align*}
implying that 
\begin{align}
 (\bx_1 - \bx_2)^\sfT \left(  \bM + p \bI \right )(\bx_1 - \bx_2) \geq \eta \| \bx_1 - \bx_2\|^2. \label{eq:mono_pg1}
\end{align}
Plugging \eqref{eq:mono_pg1} in \eqref{eq:mono_pg} completes the proof.
\end{proof}

\subsection{Primal-Dual Method}
Next we extend the results from the previous section to a primal-dual algorithm for solving \eqref{eq:QP}. 
To this end, let 
\begin{equation}
    \cL(\bx, \blambda) = f(\bx) + \blambda^\sfT \bg (\bx) 
\end{equation}
denote the Lagrangian associated with \eqref{eq:QP} and assume that the problem is distributable (Assumption \ref{as:distr}).  Similarly to \eqref{eq:reg_obj}, we consider the regularized Lagrangian
\begin{align} \label{eq:regLagHet}
    \cL_{p}(\bx, \blambda) \eqdef \cL(\bx, \blambda) + \frac{p}{2} \bx^\sfT \bGamma_x^{-1} \bx - \frac{p}{2} \blambda^\sfT \bGamma_{\lambda}^{-1} \blambda.
\end{align}
and the corresponding saddle-point problem. Note that, compared to \eqref{eq:lagrangian_reg}, we use the same parameter $p$ for both primal and dual part for simplicity. We next provide conditions under which the saddle point of $\cL_{p}(\bx, \blambda)$  coincides with the solution of the corresponding fixed-point equation as well as conditions for strong monotonicity of the primal-dual operator associated with $\cL_{p}(\bx, \blambda)$.

\subsubsection{Optimality of Fixed Points}
Consider the projected primal-dual algorithm
\begin{subequations} \label{eq:mod_pd}
\begin{align} 
\bx_i^{(k+1)} = & \proj_{\cX_i}\Big\{ \bx_i^{(k)} - \alpha_i \bGamma_{x, i}  ^{(k)} (\bx_i^{(k)}) \nabla_{\bx_i} \cL_{i, p} ( \bx_i^{(k)}, \blambda^{(k)})  \Big\},\\
\blambda^{(k+1)} = & \proj_{\cD}\Big\{ \blambda^{(k)} + \alpha \bGamma_{\lambda}^{(k)} ( \blambda^{(k)} )  \nabla_{\tiny \blambda} \cL_p( \bx^{(k)}, \blambda^{(k)})  \Big\},
\end{align}
\end{subequations}
where $\bGamma_{x, i}  ^{(k)}$ and $\bGamma_{\lambda}^{(k)}$
are defined similalry to \eqref{eq:Gamma_k}.

\begin{proposition}
     The unique fixed point associated with \eqref{eq:mod_pd} coincides with the unique saddle point of $\cL_{p}$.
\end{proposition}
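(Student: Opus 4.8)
The plan is to mirror the argument of Proposition \ref{prop:fixed_points}, applying it block-by-block to the primal variables and then (with a sign flip) to the dual variable, and finally to invoke the standard fact recalled in Section \ref{sec:key-elems} that the fixed point of the uniform-step-size primal-dual iteration is exactly the saddle point of the regularized Lagrangian. First I would reduce the claim to showing that the fixed points of \eqref{eq:mod_pd} coincide with those of the iteration in which every switching matrix $\bGamma_{x,i}^{(k)}$ and $\bGamma_{\lambda}^{(k)}$ is replaced by $\bI$. Since the projections are onto convex sets, a fixed point of a projected step is characterized by a variational inequality that is independent of the particular positive step sizes; hence the identity-matrix fixed point is precisely the unique saddle point of $\cL_p$, its uniqueness following from strong convexity of $\cL_p$ in $\bx$ and strong concavity in $\blambda$, both induced by the regularizers $\frac{p}{2}\bx^\sfT\bGamma_x^{-1}\bx$ and $-\frac{p}{2}\blambda^\sfT\bGamma_{\lambda}^{-1}\blambda$ (the matrices $\bGamma_x^{-1}$ and $\bGamma_{\lambda}^{-1}$ being positive definite).

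For the primal blocks I would fix $\blambda^*$ and apply the reasoning of Proposition \ref{prop:fixed_points} to each $\bx_i$ with objective $\bx_i \mapsto \cL_{i,p}(\bx_i,\blambda^*)$. By Assumption \ref{as:distr} the block gradient $\nabla_{\bx_i}\cL_{i,p}$ agrees with $\nabla_{\bx_i}\cL_p$, and $\alpha_i\bGamma_{x,i}^{(k)}$ is a diagonal positive definite matrix, so Lemma \ref{lem:opt} forces $\bGamma_{x,i}^{(k)}(\bx_i^*)=\bI$ at any fixed point with $\nabla_{\bx_i}\cL_{i,p}(\bx_i^*,\blambda^*)\neq\mathbf{0}$, while the gradient vanishes otherwise; in either case $\bx_i^*$ satisfies the identity-matrix primal fixed-point equation.

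The main obstacle, and the only genuinely new ingredient, is the dual block, where the algorithm maximizes rather than minimizes. I would recast the dual step as a projected-gradient step for minimizing the convex quadratic $\blambda\mapsto-\cL_p(\bx^*,\blambda)$ over the convex set $\cD$; as a function of $\blambda$ its Hessian is $p\bGamma_{\lambda}^{-1}$, which is positive definite, so this objective is convex and Lemma \ref{lem:opt} applies verbatim. Consequently, whenever $\nabla_{\blambda}\cL_p(\bx^*,\blambda^*)\neq\mathbf{0}$, the ascent point $\blambda^*+\alpha\bGamma_{\lambda}^{(k)}\nabla_{\blambda}\cL_p(\bx^*,\blambda^*)$ leaves $\cD$ for every diagonal positive definite $\bGamma_{\lambda}$, again defaulting the switch to $\bI$. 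Combining the primal and dual reductions shows that any fixed point of \eqref{eq:mod_pd} solves the identity-matrix fixed-point equation and hence equals the unique saddle point of $\cL_p$; the converse inclusion is immediate, since at the saddle point the same Lemma makes every switch equal to $\bI$, so the saddle point is itself a fixed point of \eqref{eq:mod_pd}. This establishes both existence of the claimed fixed point and its uniqueness.
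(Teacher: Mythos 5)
Your proof is correct and takes essentially the same route the paper intends: the paper's own "proof" is the single remark that the argument is similar to Proposition \ref{prop:fixed_points}, and your block-wise elaboration — primal blocks via Assumption \ref{as:distr}, the dual step recast as projected descent on the convex quadratic $-\cL_p(\bx^*,\cdot)$ so that Lemma \ref{lem:opt} applies, and the step-size-independent variational-inequality characterization to identify the identity-step fixed point with the unique saddle point — is exactly the natural way to fill in that remark. One small cleanup: in the forward direction (fixed point of \eqref{eq:mod_pd} implies identity-step fixed point) you do not need, and cannot yet legitimately invoke, Lemma \ref{lem:opt}, since optimality of the iterate is not known at that stage; the switching rule \eqref{eq:Gamma_k} alone yields the dichotomy "the gradient vanishes or the switch equals $\bI$," and Lemma \ref{lem:opt} is reserved for the converse direction where optimality is the hypothesis, exactly as in the paper's proof of Proposition \ref{prop:fixed_points}.
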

The proof is similar to that of Proposition \ref{prop:fixed_points}.

\subsubsection{Strong Monotonicity}
Let $\bz := (\bx^\sfT, \blambda^\sfT)^\sfT$ and
\begin{align}
    \tilde{\Phi}(\bz) &\eqdef \bGamma \left[\begin{array}{c}
 \nabla_{\bx}  \cL_{p}(\bz) \\
 - \nabla_{\tiny \blambda}  \cL_{p}(\bz)
\end{array}
\right] \\
&= \bGamma \left[\begin{array}{c}
\bA \bx + \bb + \bD^\sfT \blambda + p \bGamma_x^{-1} \bx  \\
 - \bD \bx - \bd + p \bGamma_{\lambda}^{-1} \blambda
\end{array}
\right] \\
&= \bGamma \left (
\begin{bmatrix}
\bA & \bD^\sfT \\
-\bD & \mathbf{0}
\end{bmatrix}
\bz + 
p \bGamma^{-1} \bz +
\begin{bmatrix}
    \bb \\ -\bd
\end{bmatrix}
\right ) \\
&= (\bGamma \bW + p \bI) \bz + \bGamma \bw,
\end{align}
where we have defined
\begin{equation}
    \bW \eqdef \begin{bmatrix}
\bA & \bD^\sfT \\
-\bD & \mathbf{0}
\end{bmatrix}, \quad \bw \eqdef \begin{bmatrix}
    \bb \\ -\bd
\end{bmatrix}.
\end{equation}

\begin{proposition} \label{prop:sm-pd}
Let
\begin{equation}
    \bV \eqdef \frac{1}{2} \Big (\bGamma \bW  + \bW^\sfT \bGamma    \Big)
\end{equation}
and let $\lambda_{\min}$ be the smallest eigenvalue of $\bV$. If $p > \max \{0, -\lambda_{\min} \}$, then $\tilde{\Phi}$ is strongly monotone with parameter $\eta := p + \min \{0, \lambda_{\min} \}$.
\end{proposition}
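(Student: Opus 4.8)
The plan is to mirror the proof of Proposition~\ref{prop:sm-gd} almost verbatim; the only structural novelty is that the coupling matrix $\bW$ is no longer symmetric (unlike $\bA$ in the gradient case), and I must track how that asymmetry behaves inside a quadratic form. First, since $\tilde{\Phi}(\bz) = (\bGamma \bW + p\bI)\bz + \bGamma\bw$ is affine, the constant term cancels in the difference, giving $\tilde{\Phi}(\bz_1) - \tilde{\Phi}(\bz_2) = (\bGamma\bW + p\bI)(\bz_1 - \bz_2)$. The monotonicity inner product then becomes
\begin{align*}
\left(\tilde{\Phi}(\bz_1) - \tilde{\Phi}(\bz_2)\right)^\sfT(\bz_1 - \bz_2)
&= (\bz_1 - \bz_2)^\sfT(\bGamma\bW + p\bI)^\sfT(\bz_1 - \bz_2) \\
&= (\bz_1 - \bz_2)^\sfT(\bW^\sfT\bGamma + p\bI)(\bz_1 - \bz_2),
\end{align*}
where I used $\bGamma^\sfT = \bGamma$, which holds because the stacked step-size matrix $\bGamma$ is diagonal.

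The key observation—and the one conceptual point in the whole argument—is that a quadratic form sees only the symmetric part of its matrix: $\bv^\sfT\bP\bv = \bv^\sfT\tfrac{1}{2}(\bP + \bP^\sfT)\bv$ for any real square $\bP$. Applying this with $\bP = \bW^\sfT\bGamma$ and using $(\bW^\sfT\bGamma)^\sfT = \bGamma\bW$, the expression collapses to
\begin{align*}
(\bz_1 - \bz_2)^\sfT(\bW^\sfT\bGamma + p\bI)(\bz_1 - \bz_2)
= (\bz_1 - \bz_2)^\sfT(\bV + p\bI)(\bz_1 - \bz_2),
\end{align*}
with $\bV = \tfrac{1}{2}(\bGamma\bW + \bW^\sfT\bGamma)$ exactly as defined in the statement. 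Thus the skew block structure of $\bW$ carrying the $\pm\bD$ coupling is symmetrized away, and the computation now has precisely the form of~\eqref{eq:mono_pg} in the gradient case.

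From here the argument closes identically to Proposition~\ref{prop:sm-gd}. Since $\lambda_{\min}$ is the smallest eigenvalue of the symmetric matrix $\bV$, the smallest eigenvalue of $\bV + p\bI$ is $\lambda_{\min} + p$; under $p > \max\{0, -\lambda_{\min}\}$ this is positive, so $\bV + p\bI \succ 0$, and in fact $\bV + p\bI - \eta\bI \succcurlyeq 0$ for $\eta = p + \min\{0, \lambda_{\min}\}$, since a check of the two cases $\lambda_{\min} \geq 0$ and $\lambda_{\min} < 0$ shows $\eta \leq \lambda_{\min} + p$ in both. This yields $(\bz_1 - \bz_2)^\sfT(\bV + p\bI)(\bz_1 - \bz_2) \geq \eta\|\bz_1 - \bz_2\|^2$, which is exactly strong monotonicity of $\tilde{\Phi}$ with parameter $\eta$. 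I expect no genuine obstacle: once the antisymmetric part of $\bGamma\bW$ is seen to drop out of the quadratic form, the result reduces to the eigenvalue bound already used for the gradient method. The only place demanding care is the transpose bookkeeping—ensuring $(\bGamma\bW)^\sfT = \bW^\sfT\bGamma$ rather than $\bW\bGamma$—which again relies solely on $\bGamma$ being diagonal.
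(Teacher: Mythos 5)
Your proposal is correct and takes essentially the same route as the paper, which simply states that the proof of Proposition~\ref{prop:sm-pd} follows that of Proposition~\ref{prop:sm-gd}: an affine-operator difference, a transpose/symmetrization step reducing the quadratic form to $\bV + p\bI$, and the eigenvalue bound giving $\eta = p + \min\{0,\lambda_{\min}\}$. Your explicit handling of the asymmetry of $\bW$ (noting that the quadratic form sees only the symmetric part, so the skew $\pm\bD$ coupling drops out) is precisely the detail the paper leaves implicit in carrying over the argument from $\bM$ to $\bV$.
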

The proof of Proposition \ref{prop:sm-pd} follows that of Proposition \ref{prop:sm-gd}.

\subsection{Convergence Analysis of Time-Varying OFO Algorithms} \label{sec:conv-ofo-hetero}

Given the above results, a similar tracking result to that of Theorem \ref{thm:conv} can be established. In particular, the tracking is now with respect to the modified reference trajectory $\{\bz^{(*, k)}\}$ which are the saddle points of the regularized Lagrangian \eqref{eq:regLagHet}. The analysis is based on the standard time-varying fixed point convergence analysis \cite{fixedPoints} and is omitted here for brevity.

\section{Application to Power Systems}
\label{sec:power_systems}

We next show two applications in the context of power systems, where heterogeneous  step sizes naturally arise. The target optimization problem \eqref{eqn:sampledProblem} is the linearized optimal power flow (OPF) problem as, e.g., in \cite{unified}.
The OFO framework \eqref{eq:dualstep} \eqref{eq:algo_distr} is set up in a hierarchical structure (see Figure \ref{fig:ofo_diagram}) where a coordinator collects the measurements of the network outputs $\hat{\by}^{(k)}$, keeps track of the output constraints \eqref{eq:ineqconst}, calculates their associated dual variables $\blambda^{(k)}$, and broadcasts them to the $N$ local controllers. In turn, each local controller $i$ computes power injection set points $\bx^{(k)}_i$ for distributed energy resource (DER) $i$ based on its cost function $f^{(k)}_i$, its feasible set $\cX_i^{(k)}\subset\mathbb{R}^2$, and the dual variables received from the coordinator.

\begin{figure}
    \centering
    \includegraphics[width=0.95\columnwidth]{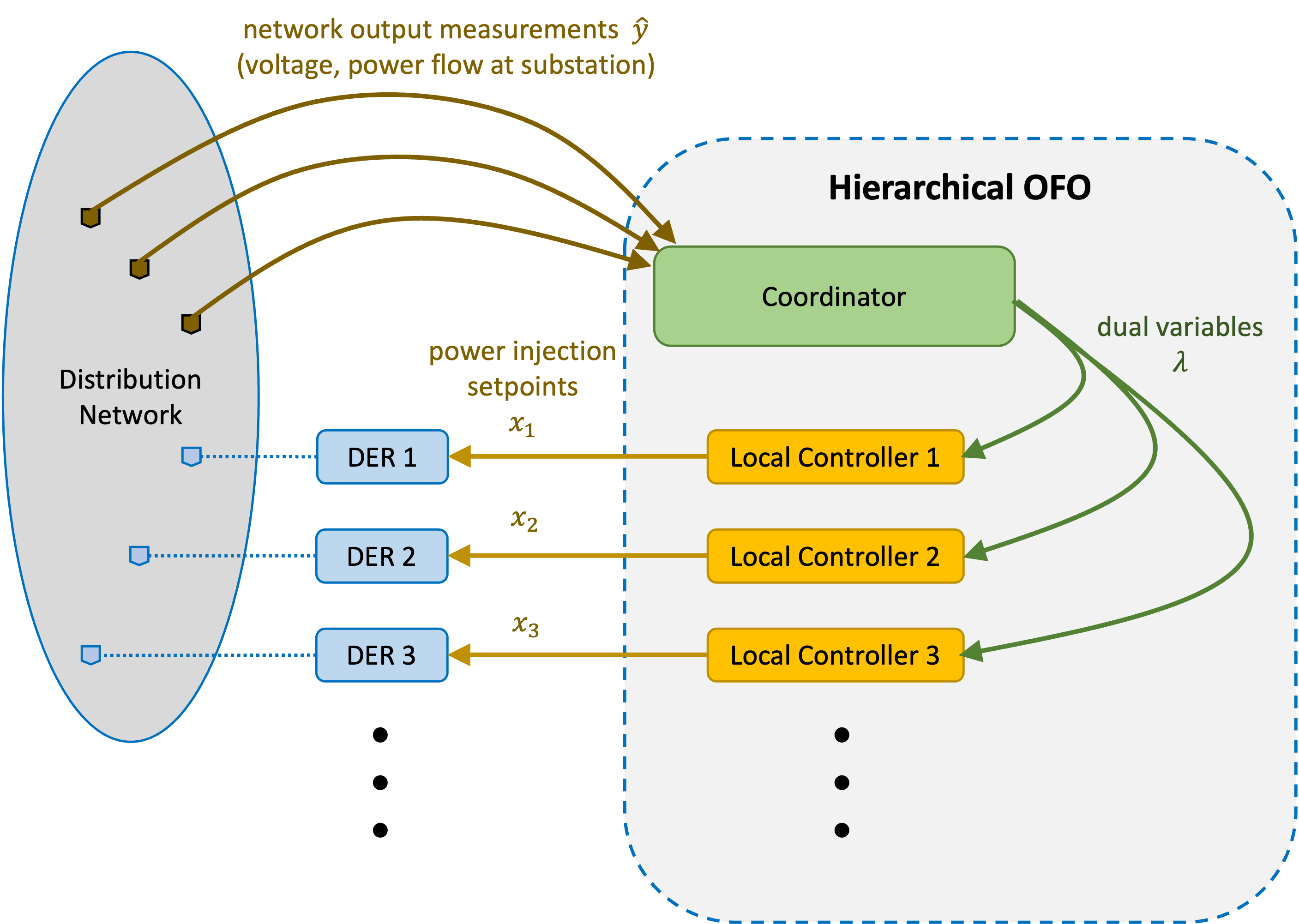}
    \caption{Hierarchical control architecture of DERs in a power distribution network using OFO.}
    \label{fig:ofo_diagram}
\end{figure}

There are two types of output constraints \eqref{eq:ineqconst} in the OPF problem associated with two different grid services:
\begin{itemize}
    \item \textbf{Virtual power plant (VPP).} Vector $\by$ contains the aggregate power flow at the distribution feeder head (substation), and a constraint is added to enforce tracking of a given aggregate power setpoint. 
    \item \textbf{Voltage regulation.} Vector $\by$ contains voltage magnitudes at different buses of the distribution network, and constraints are added to  keep voltage magnitudes between prescribed bounds.
\end{itemize}
An estimate of the Jacobian matrix $\bC$ in \eqref{eq:sys} is determined by linearizing the power flow equations as, e.g., in~\cite{bernstein2017linear}.
The cost functions $f_i^{(k)}$ of DERs are modeled with a quadratic function, quantifying the distance from a prescribed setpoint.
For example, for a solar panel, the cost can be measured as the squared amount of active power curtailed from the maximum available power being generated from the panel.
For an energy storage device or electric vehicle, it can be the squared difference between the injected active power and power to get to a desired state of charge.
The feasible set $\cX_i^{(k)}$  depends on the state of the DER.
More details of DER models can be found in~\cite{unified}.




\subsection{Adaptive Step Sizes}

One of the challenges to implement OFO on a real distribution system is that the step sizes need to be decided which heavily determine its performance.
If it were necessary to choose a common time-invariant step size, then a centralized operator would need to measure the performance of all the systems and constraints while finding a common step size that strikes a balance between them.
Allowing each system and constraint to have its own time-varying step size opens up the possibility that each system and constraint can automatically find their own step sizes based on local conditions. Specifically, \emph{adaptive} step sizes approach is proposed in \cite{yokota2017, comden2023adaptive}, as summarized next.

The adaptivity of the step sizes is based on two intuitive observations: (i) if the primal/dual variables are moving in the same direction for a period of time, then it likely indicates that they are heading towards a similar target point that is far away and speeding up the rate to get there would benefit the system; (ii) if the variables are oscillating, then it likely indicates that they are continually overshooting a target point and should dampen their action speeds.
Additionally, each system of primal variables and grouping of dual variables, based on similarities between associated constraints, should independently adapt their step sizes.

Let $\Omega$ be the set of primal and dual variables grouped for adapting step sizes, i.e., $\Omega:=\{\bx_1,\dots,\bx_N,\blambda_\text{volt},\blambda_\text{VPP}\}$ where $\blambda_\text{volt}$ are the dual variables associated with voltage regulation and $\blambda_\text{VPP}$ are the dual variables associated with VPP.
The gradient of the Lagrangian with respect to a specific set of variables $\omega\in\Omega$ gives the direction  that the algorithm is heading in, and thus comparing the gradients of the Lagrangian between consecutive iterations indicates a change or continuity in directions.
We use the cosine similarity metric to measure how much the directions change for $\omega\in\Omega$ at iteration $k$:
\begin{align}
    s_{\omega}^{(k)}:= \frac{\nabla_{\omega} \cL_p( \bx^{(k)}, \blambda^{(k)})^\sfT \nabla_{\omega} \cL_p( \bx^{(k-1)}, \blambda^{(k-1)})}{\|\nabla_{\omega} \cL_p( \bx^{(k)}, \blambda^{(k)})\| \|\nabla_{\omega} \cL_p( \bx^{(k-1)}, \blambda^{(k-1)})\|} \label{eq:adaptive_rule0}
\end{align}
where $s_{\omega}^{(k)}$ is between -1 and 1.
A value closer to 1 indicates that the directions are similar; whereas, a value closer to -1 indicates that the directions are opposite.

The cosine similarity at an iteration is used in a threshold rule to decide whether to proportionally increase, decrease or keep the step size the same:
\begin{align}
    \bgamma_\omega^{(k)} := \begin{cases}
        \overline{\kappa}_{\omega}\bgamma_\omega^{(k-1)} & \text{if }  s_{\omega}^{(k)}>\overline{s}_{\omega} \\ \underline{\kappa}_{\omega}\bgamma_\omega^{(k-1)} & \text{if }  s_{\omega}^{(k)}<\underline{s}_{\omega} \\
        \bgamma_\omega^{(k-1)} & \text{otherwise}
    \end{cases} \label{eq:adaptive_rule}
\end{align}
where $\overline{\kappa}_{\omega}>1$ is the increase factor, $\underline{\kappa}_{\omega}<1$ is the decrease factor, and $(\overline{s}_{\omega},\underline{s}_{\omega})$ are the thresholds.
The settings of the factors depend on the application, but typically it is good for $(\overline{\kappa}_{\omega}-1)<(1-\underline{\kappa}_{\omega})$ so that the speed up is gradual and the slow down to avoid oscillations is quick. The stepsizes are then used in \eqref{eq:mod_pd}, with $\bGamma_{x, i}^{(k)} := \diag(\bgamma_{\bx_i}^{(k)})$ and $\bGamma_{\lambda}^{(k)} := \diag(\blambda_\text{volt}^{(k)}, \blambda_\text{VPP}^{(k)})$.

As a demonstration, we apply OFO with adaptive step sizes to a three-phase power distribution system (see \cite{comden2023adaptive} for more details and results).
The following parameters for the adaptive step size rule are given the common settings for all sets of variables $\omega\in\Omega$: $\underline{s}_{\omega}:=0$, $\overline{s}_{\omega}:=0.9$, and $\overline{\kappa}_{\omega}:=1.005$.
The parameters for $\underline{\kappa}_{\omega}$ are differentiated for the dual variables: $\underline{\kappa}_{\blambda_\text{volt}}:=0.995$ and $\underline{\kappa}_{\blambda_\text{VPP}}:=0.5$ so that step size for VPP decreases much faster than that for voltage regulation when oscillations are observed.
This makes the voltage regulation constraints have a stronger adherence than the VPP constraints.
Finally, the DER step size parameter $\underline{\kappa}_{\bx_i}:\forall i\in\{1,\dots,N\}$ is set to 0.95.

\begin{figure}
     \centering
     \begin{subfigure}[b]{0.95\columnwidth}
         \centering
         \includegraphics[width=\columnwidth]{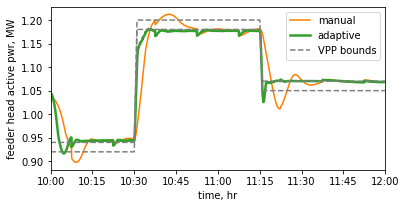}
         \caption{Phase A}
         \label{fig:vpptrack_PA}
     \end{subfigure}
     \hfill
     \begin{subfigure}[b]{0.95\columnwidth}
         \centering
         \includegraphics[width=\columnwidth]{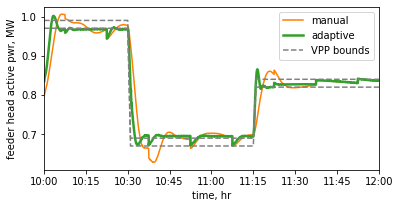}
         \caption{Phase B}
         \label{fig:vpptrack_PB}
     \end{subfigure}
     \hfill
     \begin{subfigure}[b]{0.95\columnwidth}
         \centering
         \includegraphics[width=\columnwidth]{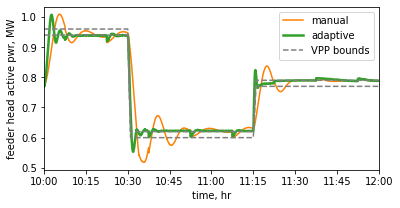}
         \caption{Phase C}
         \label{fig:vpptrack_PC}
     \end{subfigure}
        \caption{VPP set point tracking: manually tuned versus adaptively tuned step sizes.}
        \label{fig:vpptrack_P}
\end{figure}

\begin{figure}
     \centering
     \begin{subfigure}[b]{0.95\columnwidth}
         \centering
         \includegraphics[width=\columnwidth]{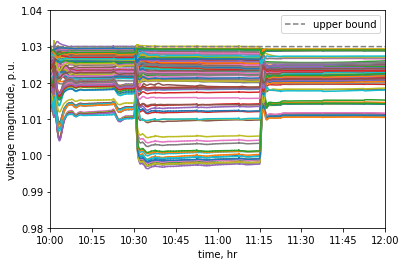}
         \caption{Adaptive step size tuning}
         \label{fig:vpptrack_auto_voltages}
     \end{subfigure}
     \hfill
     \begin{subfigure}[b]{0.95\columnwidth}
         \centering
         \includegraphics[width=\columnwidth]{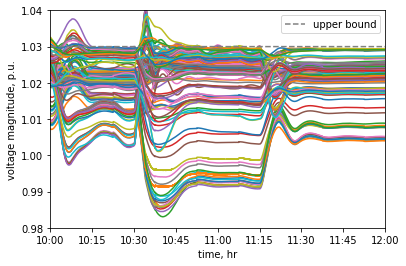}
         \caption{Manual step size tuning}
         \label{fig:vpptrack_man_voltages}
     \end{subfigure}
        \caption{Voltages magnitudes when the VPP set points have step changes.}
        \label{fig:vpptrack_voltages}
\end{figure}

\begin{figure}
     \centering
     \begin{subfigure}[b]{0.95\columnwidth}
         \centering
         \includegraphics[width=\columnwidth]{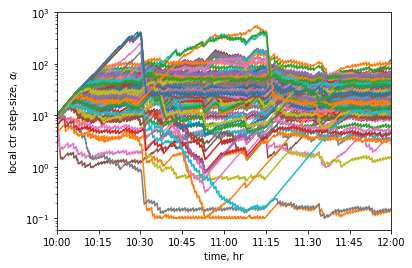}
         \caption{DERs}
         \label{fig:vpptrack_auto_stepsize_LCs}
     \end{subfigure}
     \hfill
     \begin{subfigure}[b]{0.95\columnwidth}
         \centering
         \includegraphics[width=\columnwidth]{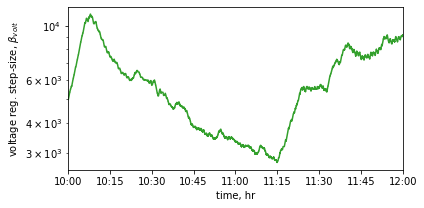}
         \caption{Voltage support}
         \label{fig:vpptrack_auto_stepsize_volt}
     \end{subfigure}
     \begin{subfigure}[b]{0.95\columnwidth}
         \centering
         \includegraphics[width=\columnwidth]{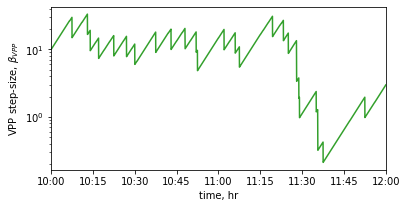}
         \caption{VPP}
         \label{fig:vpptrack_auto_stepsize_VPP}
     \end{subfigure}
        \caption{Step sizes of the DERs and the grid services during step changes in the VPP set points.}
        \label{fig:vpptrack_auto_stepsize}
\end{figure}

In the scenario we showcase here, the VPP setpoint makes two step changes and then the OFO controller needs to coordinate the DERs to change the feeder head power while keeping the voltage magnitudes within bounds.
We compare using the best manually tuned common step size with step sizes from the rule in  \eqref{eq:adaptive_rule0} and \eqref{eq:adaptive_rule}.
Figure \ref{fig:vpptrack_P} shows how well the feeder head power follows the VPP bounds.
The trajectories using the adaptive step sizes follows the bounds much faster and with less oscillations than those using the manually tuned step size.
Figure \ref{fig:vpptrack_voltages} shows how well the voltage magnitudes are being kept within the bounds.
The voltage magnitude trajectories using the adaptive step sizes has almost no violation and very little oscillations as compared those using the manually tuned step size.

Finally, we show in Figure \ref{fig:vpptrack_auto_stepsize} how the step sizes change over time for the DERs and the grid services.
It is easy to see that the heterogeneity of the step sizes and of the directions they go when the VPP bounds change are what enhances OFO to perform better than when using a manually tuned common time-invariant step size.

\subsection{Zero-Order OFO}
We next briefly mention the second application where heterogeneous step sizes naturally arise, which is the OFO with zero-order gradient estimation, as, e.g., in \eqref{eq:grad_proxy2} \eqref{eq:zeroOrderGradients}. To illustrate the idea, we  present the results from \cite{chen2020} regarding the approximate gradient \eqref{eq:grad_proxy2}.
Specifically,  under the assumptions in \cite{chen2020}, the zero-order projected gradient descent is approximated via the following \emph{averaged} iteration
\begin{align}\label{eq:zero-order-iter}
   \bx^{(k+1)}    &=\proj_{\cX^{(k)}} \Big \{\bx^{(k)} \notag \\
   &\qquad - \alpha     ( \Gamma^{(k)}\nabla F^{(k)}(\bx^{(k)}) +   O(  \alpha+  \epsy^2 ))  \Big \}
\end{align}
where
\begin{equation} \label{eq:exploration}
    \Gamma^{(k)} := \int_{kT }^{k(T+1)} \bfprobe(\tau) \bfprobe(\tau)^\sfT \, d\tau.
\end{equation}
In \eqref{eq:exploration}, $T$ is the averaging period, and $\bfprobe(t)$ is a continuous-time exploration signal. For example, if $\bfprobe(t)$ is chosen as a set of sinusoidal signals with distinct periods and amplitudes, and $T$ is chosen as a common integer multiplier of
the periods, the matrix $\Gamma^{(k)} $ will be diagonal; see \cite{chen2020} for details. Therefore, iteration \eqref{eq:zero-order-iter} becomes an approximate projected gradient algorithm with heterogeneous step sizes and the analysis in this paper applies.

\section{Conclusion}
\label{sec:conclusion}

In this paper, we presented an overview of the OFO framework and the recent advances on the topic. We also provided new theoretical insights on primal-dual OFO algorithms with heterogeneous step sizes for different elements of the gradient, and illustrated two applications in the power systems control context. 

Most of the new analysis is restricted to quadratic programs, and the extension of the approach to general  optimization problems remains an open research topic. The proposed regularization approach is effective in establishing strong monotonicity but leads to an analysis with respect to perturbed optimal saddle points; establishing tracking properties by bypassing the regularization (and perhaps strong monotonicity property altogether) is an open problem. Finally, an interesting direction is to consider the case where the matrix $\Gamma^{(k)}$ is a general positive definite, or even positive semi-definite matrix. The semi-definite case is of particular interest in the context of exact (rather than average) analysis of zero-order methods \cite{chen2020} and of asynchronous feedback gradient methods \cite{poolla2022}. 

\bibliographystyle{IEEEtran}
\bibliography{Reference,strings,markov,biblio,biblio1,biblio2}

\def\cprime{$'$}
\begin{thebibliography}{10}
\providecommand{\url}[1]{#1}
\csname url@samestyle\endcsname
\providecommand{\newblock}{\relax}
\providecommand{\bibinfo}[2]{#2}
\providecommand{\BIBentrySTDinterwordspacing}{\spaceskip=0pt\relax}
\providecommand{\BIBentryALTinterwordstretchfactor}{4}
\providecommand{\BIBentryALTinterwordspacing}{\spaceskip=\fontdimen2\font plus
\BIBentryALTinterwordstretchfactor\fontdimen3\font minus
  \fontdimen4\font\relax}
\providecommand{\BIBforeignlanguage}[2]{{%
\expandafter\ifx\csname l@#1\endcsname\relax
\typeout{** WARNING: IEEEtran.bst: No hyphenation pattern has been}%
\typeout{** loaded for the language `#1'. Using the pattern for}%
\typeout{** the default language instead.}%
\else
\language=\csname l@#1\endcsname
\fi
#2}}
\providecommand{\BIBdecl}{\relax}
\BIBdecl

\bibitem{opfPursuit}
E.~Dall'Anese and A.~Simonetto, ``Optimal power flow pursuit,'' \emph{IEEE
  Transactions on Smart Grid}, vol.~9, no.~2, pp. 942--952, March 2018.

\bibitem{Hauswirth2016}
A.~{Hauswirth}, S.~{Bolognani}, G.~{Hug}, and F.~{Dörfler}, ``Projected
  gradient descent on riemannian manifolds with applications to online power
  system optimization,'' in \emph{2016 54th Annual Allerton Conference on
  Communication, Control, and Computing (Allerton)}, Sep. 2016, pp. 225--232.

\bibitem{Hauswirth2018}
A.~{Hauswirth}, I.~{Subotić}, S.~{Bolognani}, G.~{Hug}, and F.~{Dörfler},
  ``Time-varying projected dynamical systems with applications to feedback
  optimization of power systems,'' in \emph{2018 IEEE Conference on Decision
  and Control (CDC)}, Dec 2018, pp. 3258--3263.

\bibitem{bernstein2019}
A.~{Bernstein}, E.~{Dall'Anese}, and A.~{Simonetto}, ``Online primal-dual
  methods with measurement feedback for time-varying convex optimization,''
  \emph{IEEE Transactions on Signal Processing}, vol.~67, no.~8, pp.
  1978--1991, April 2019.

\bibitem{colombino2019}
M.~{Colombino}, E.~{Dall'Anese}, and A.~{Bernstein}, ``Online optimization as a
  feedback controller: Stability and tracking,'' \emph{IEEE Transactions on
  Control of Network Systems}, pp. 1--1, 2019.

\bibitem{chen2020}
Y.~Chen, A.~Bernstein, A.~Devraj, and S.~Meyn, ``Model-free primal-dual methods
  for network optimization with application to real-time optimal power flow,''
  in \emph{2020 American Control Conference (ACC)}, 2020, pp. 3140--3147.

\bibitem{haberle2020}
V.~Häberle, A.~Hauswirth, L.~Ortmann, S.~Bolognani, and F.~Dörfler,
  ``Non-convex feedback optimization with input and output constraints,''
  \emph{IEEE Control Systems Letters}, vol.~5, no.~1, pp. 343--348, 2021.

\bibitem{he2022modelfree}
Z.~He, S.~Bolognani, J.~He, F.~Dörfler, and X.~Guan, ``Model-free nonlinear
  feedback optimization,'' 2022.

\bibitem{zhan2023}
S.~Zhan, I.~Dukovska, W.~{van den Akker}, A.~{van der Molen}, J.~Morren,
  N.~Paterakis, and J.~Slootweg, ``\BIBforeignlanguage{English}{Review of
  recent developments in technical control approaches for voltage and
  congestion management in distribution networks},'' in
  \emph{\BIBforeignlanguage{English}{IEEE PowerTech Belgrade 2023}}.\hskip 1em
  plus 0.5em minus 0.4em\relax Institute of Electrical and Electronics
  Engineers, 2023.

\bibitem{nonhoff2021}
M.~Nonhoff and M.~A. Müller, ``Online convex optimization for data-driven
  control of dynamical systems,'' \emph{IEEE Open Journal of Control Systems},
  vol.~1, pp. 180--193, 2022.

\bibitem{bernstein2019a}
A.~Bernstein, Y.~Chen, M.~Colombino, E.~Dall’Anese, P.~Mehta, and S.~Meyn,
  ``Quasi-stochastic approximation and off-policy reinforcement learning,'' in
  \emph{2019 IEEE 58th Conference on Decision and Control (CDC)}, 2019, pp.
  5244--5251.

\bibitem{comden2023adaptive}
J.~Comden, J.~Wang, and A.~Bernstein, ``Adaptive primal-dual control for
  distributed energy resource management,'' \emph{arXiv preprint
  arXiv:2304.12478}, 2023.

\bibitem{Koshal11}
J.~Koshal, A.~Nedi\'{c}, and U.~Y. Shanbhag, ``Multiuser optimization:
  Distributed algorithms and error analysis,'' \emph{{SIAM} J. on
  Optimization}, vol.~21, no.~3, pp. 1046--1081, 2011.

\bibitem{spa92}
J.~C. {Spall}, ``Multivariate stochastic approximation using a simultaneous
  perturbation gradient approximation,'' \emph{IEEE Transactions on Automatic
  Control}, vol.~37, no.~3, pp. 332--341, March 1992.

\bibitem{unified}
A.~{Bernstein} and E.~{Dall’Anese}, ``Real-time feedback-based optimization
  of distribution grids: A unified approach,'' \emph{IEEE Transactions on
  Control of Network Systems}, vol.~6, no.~3, pp. 1197--1209, Sep. 2019.

\bibitem{fixedPoints}
A.~{Bernstein} and E.~{Dall'Anese}, ``Asynchronous and distributed tracking of
  time-varying fixed points,'' in \emph{2018 IEEE Conference on Decision and
  Control (CDC)}, Dec 2018, pp. 3236--3243.

\bibitem{bernstein2017linear}
A.~Bernstein and E.~Dall'Anese, ``Linear power-flow models in multiphase
  distribution networks,'' in \emph{2017 IEEE PES Innovative Smart Grid
  Technologies Conference Europe (ISGT-Europe)}.\hskip 1em plus 0.5em minus
  0.4em\relax IEEE, 2017, pp. 1--6.

\bibitem{yokota2017}
T.~Yokota and H.~Hontani, ``An efficient method for adapting step-size
  parameters of primal-dual hybrid gradient method in application to total
  variation regularization,'' in \emph{2017 Asia-Pacific Signal and Information
  Processing Association Annual Summit and Conference (APSIPA ASC)}, 2017, pp.
  973--979.

\bibitem{poolla2022}
B.~K. Poolla, G.~Cavraro, and A.~Bernstein, ``State estimation for distribution
  networks with asynchronous sensors using stochastic descent,'' in \emph{2022
  IEEE Power \& Energy Society General Meeting (PESGM)}, 2022, pp. 1--5.

\end{thebibliography}

\end{document}